\newtheorem{thm}{Theorem}[section]
\newtheorem{lem}[thm]{Lemma}
\newtheorem{prop}[thm]{Proposition}
\newtheorem{cor}[thm]{Corollary}
\newtheorem{rem}{Remark}[section]
\newtheorem*{prob*}{Problem}
\def\R{\mathbb{R}}
\def\N{\mathbb{N}}
\def\Z{\mathbb{Z}}
\def\X{\mathbb{X}}
\def\1{\mathbbm{1}}
\def\an {\text{\, and \,}}
\def\supp{\mathrm{supp}}
\def\K{\mathcal{K}}
\def\X{\mathcal{X}}
\def\clos{\mathrm{clos}}
\def\XXint#1#2#3{{\setbox0=\hbox{$#1{#2#3}{\int}$ }
\vcenter{\hbox{$#2#3$ }}\kern-.6\wd0}}
\begin{document}

\title[Isometric actions of inductively compact groups]{Ergodic measures on compact metric spaces for isometric actions  by inductively compact groups}


\author
{Yanqi Qiu}
\address
{Yanqi QIU: CNRS, Institut de Math{\'e}matiques de Toulouse, Universit{\'e} Paul Sabatier, 118 Route de Narbonne, F-31062 Toulouse Cedex 9, France}

\email{yqi.qiu@gmail.com}

\thanks{This work is supported by the grant IDEX UNITI - ANR-11-IDEX-0002-02, financed by Programme ``Investissements d'Avenir'' of the Government of the French Republic managed by the French National Research Agency.}

\begin{abstract}
 We obtain  a partial converse of Vershik's description of ergodic probability measures  on a compact metric space with respect to an isometric action by an inductively compact group. This allows us to identify, in this setting,  the set of ergodic probability measures  with the set of weak limit points of orbital measures. 
\end{abstract}

\subjclass[2010]{Primary 37A25; Secondary 28A33}
\keywords{}

\maketitle

\section{Introduction}

A topological group is called inductively compact if it is the inductive limit of an increasing chain of compact groups.  In this note, we will investigate the ergodic probability measures of on a compact metric space with respect to an isometric action by an inductively compact group.  

Let $\K(1) \subset \cdots \subset \K(n)\subset \cdots$ be an increasing chain of compact groups, denote the corresponding inductive limit by $\K(\infty)$. Let $\mathcal{X}$ be a separable metric complete space. Assume that $\K(\infty)$ acts on $\X$ by Borel actions. By a result of Vershik, all ergodic probability measures on $\X$ are the weak limits of orbital measures. Let us state this more precisely. Denote by $m_n$ the normalized Haar measure on $\K(n)$. For any point $x\in \X$, we denote by $\mu_n^x$ the unique $\K(n)$-invariant probability measure on the orbit 
$$\K(n)\cdot x  =  \{y \in \X: y = g \cdot x \text{\, for some $g \in\K(n)$}\}.$$
Let us denote by $\mathcal{P}(\X)$ the set of all Borel probability measures on $\X$. 
Recall that by definition, a sequence  $(\nu_n)_{n\in\N}$ of Borel probability measures on $\X$ converges weakly to a Borel probability measure $\nu \in \mathcal{P}(\X)$ if  for any bounded continuous function $f$, we have 
$$
 \int_{\X} f d\nu = \lim_{n\to\infty}  \int_{\X} f d\nu_n.
$$
In this situation, we denote $\nu_n \Longrightarrow \nu$ as $n \to \infty$.

\begin{thm}[{Vershik \cite[Theorem 1]{Vershik-inf-group}}]\label{Vershik}
Let $\mu$ be an ergodic $\mathcal{K}(\infty)$-invariant Borel probability measure on $\mathcal{X}$. Then for $\mu$-almost every point $x\in\mathcal{X}$, we have 
$$
\text{$\mu_n^x \Longrightarrow \mu$ as $n \to \infty$.} 
$$
\end{thm}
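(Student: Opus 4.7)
The plan is to represent each orbital integral as a conditional expectation against a decreasing family of $\sigma$-algebras, and then invoke the reverse martingale convergence theorem. For each $n$, let $\mathcal{I}_n$ denote the $\sigma$-algebra of $\K(n)$-invariant Borel subsets of $\X$. Since $\K(n)\subset\K(n+1)$, the sequence $(\mathcal{I}_n)_{n\in\N}$ is decreasing; moreover, $\bigcap_n\mathcal{I}_n$ equals the $\sigma$-algebra $\mathcal{I}_\infty$ of $\K(\infty)$-invariant Borel sets, because $\K(\infty)=\bigcup_n\K(n)$. For any bounded Borel $f:\X\to\R$, define
\[
(T_n f)(x) \;=\; \int_{\K(n)} f(k\cdot x)\,dm_n(k) \;=\; \int_{\X} f\,d\mu_n^x.
\]
Since $\mu$ is $\K(n)$-invariant and $m_n$ is Haar, a direct Fubini argument shows that $T_n$ is the orthogonal projection of $L^2(\mu)$ onto the subspace of $\K(n)$-invariant functions, hence $T_n f = \E_\mu[f\mid\mathcal{I}_n]$ $\mu$-almost surely.

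By the reverse martingale convergence theorem, $\E_\mu[f\mid\mathcal{I}_n]\to\E_\mu[f\mid\mathcal{I}_\infty]$ as $n\to\infty$, both $\mu$-almost surely and in $L^1(\mu)$. The hypothesis that $\mu$ is ergodic for $\K(\infty)$ is precisely that $\mathcal{I}_\infty$ is $\mu$-trivial, so the right-hand side equals the constant $\int_\X f\,d\mu$. Therefore, for each bounded Borel $f$, there is a full-$\mu$-measure set on which $\int_\X f\,d\mu_n^x\to\int_\X f\,d\mu$.

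To promote this pointwise-in-$f$ convergence to weak convergence $\mu_n^x\Longrightarrow\mu$, I would use the separability of $\X$: since $\X$ is Polish, one can choose a countable family $\{f_k\}_{k\in\N}\subset C_b(\X)$ that is convergence-determining for the weak topology on $\mathcal{P}(\X)$ (for example, a countable dense subset of bounded Lipschitz functions in a compatible bounded metric). Applying the previous step to each $f_k$ and intersecting the countably many full-measure sets yields a single set of full $\mu$-measure on which $\int f_k\,d\mu_n^x\to\int f_k\,d\mu$ for every $k$, and thus $\mu_n^x\Longrightarrow\mu$.

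The step requiring the most care is the identification $T_n f = \E_\mu[f\mid\mathcal{I}_n]$. It depends on joint measurability of $(k,x)\mapsto f(k\cdot x)$, guaranteed by the Borel action hypothesis, together with the fact that $T_n f$ is genuinely $\K(n)$-invariant in $L^2(\mu)$, a Fubini argument built from right-invariance of $m_n$ and $\K(n)$-invariance of $\mu$. Nothing conceptually deep happens here, but this is where the proof's measure-theoretic bookkeeping is concentrated.
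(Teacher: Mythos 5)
Your proof is correct and follows essentially the same route the paper relies on: the statement is cited from Vershik, but the machinery recalled in Section 2 (the averaging operators $A_n$, the reverse martingale limit $A^\nu_\infty f$, and the ergodicity criterion, quoted from Bufetov's Propositions 6--8) is exactly your identification of $T_nf$ with $\E_\mu[f\mid\mathcal{I}_n]$ followed by reverse martingale convergence and triviality of $\mathcal{I}_\infty$. The upgrade to weak convergence via a countable convergence-determining family on a Polish space is the standard and correct way to finish.
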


The purpose of this note is to give a partial converse of Theorem \ref{Vershik}. 

\begin{thm}\label{main-thm}
Let $\mathcal{X}$ be a {\it compact metric space}. Assume that $\K(\infty)$ acts on $\X$ by isometric isomorphisms.  If $\mu \in\mathcal{P}(\X)$ is a weak limit point of the sequence of orbital measures $(\mu_n^{x_0})_{n\in\N}$, then $\mu$ is ergodic.  
\end{thm}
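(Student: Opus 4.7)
The plan is to show that $\mu$ is the \emph{unique} $\K(\infty)$-invariant probability measure supported on the orbit closure $Y:=\overline{\K(\infty)\cdot x_0}$; this automatically makes $\mu$ an extreme point of the simplex of invariant measures, hence ergodic. Since $\mu_n^{x_0}$ is $\K(m)$-invariant for every $n\geq m$ and is supported on $\K(n)\cdot x_0\subset Y$, any weak limit point $\mu$ is $\K(\infty)$-invariant, and the Portmanteau theorem gives $\mu(Y)=1$. The isometric hypothesis enters first to show that $Y$ is a minimal $\K(\infty)$-set: given $y\in Y$, pick $h_k\in\K(\infty)$ with $h_k\cdot x_0\to y$; then $d(x_0,h_k^{-1}\cdot y)=d(h_k\cdot x_0,y)\to 0$, so $x_0\in\overline{\K(\infty)\cdot y}$, and since the latter is $\K(\infty)$-invariant it must coincide with $Y$.

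The isometric hypothesis enters a second time through an equicontinuity argument. For any $f\in C(\X)$ set
\[
f_n(y):=\int f\,d\mu_n^y=\int_{\K(n)}f(g\cdot y)\,dm_n(g).
\]
Because every $g\in\K(n)$ acts as an isometry, $|f_n(y)-f_n(y')|\leq\omega_f(d(y,y'))$ uniformly in $n$, where $\omega_f$ is the modulus of continuity of $f$; thus $(f_n)$ is a uniformly bounded equicontinuous family on the compact space $Y$. Arzel{\`a}--Ascoli then extracts a subsequence $f_{n_j}$ converging uniformly to some $h\in C(Y)$. Right-invariance of $m_n$ makes each $f_n$ $\K(n)$-invariant, so $h$ is $\K(m)$-invariant for every $m$, hence $\K(\infty)$-invariant; minimality of $Y$ forces $h$ to be the constant $c_f:=h(x_0)$, because $h^{-1}(c_f)$ is a closed, non-empty, $\K(\infty)$-invariant subset of $Y$. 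Now for \emph{any} $\K(\infty)$-invariant $\nu\in\mathcal{P}(\X)$ with $\nu(Y)=1$, Fubini combined with $\nu$-invariance gives $\int f\,d\nu=\int f_n\,d\nu$ for every $n$, so passing to the limit $j\to\infty$ yields $\int f\,d\nu=c_f$. Since $f\in C(\X)$ was arbitrary, any such $\nu$ is uniquely determined, and in particular coincides with $\mu$.

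The delicate step is the Arzel{\`a}--Ascoli argument together with the minimality of $Y$, both of which rely crucially on the isometric hypothesis: without it, orbit closures need not be minimal and the averages $f_n$ need not form an equicontinuous family, so one would lose the mechanism that converts the weak convergence $\mu_{n_k}^{x_0}\Longrightarrow\mu$ into a rigidity statement valid for every $\K(\infty)$-invariant probability measure on $Y$.
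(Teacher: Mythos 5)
Your proof is correct, but it follows a genuinely different route from the paper's. The paper invokes Bufetov's reverse-martingale machinery: the criterion that $\nu$ is ergodic iff the averages $A_n\psi$ converge $\nu$-a.e.\ to $\int\psi\,d\nu$ for a dense family $\psi$; it then uses the hypothesis $\mu_{n}^{x_0}\Longrightarrow\mu$ to identify the pointwise limit of $A_nf$ on the orbit $\K(\infty)\cdot x_0$, and extends this convergence to $\clos(\K(\infty)\cdot x_0)\supset\supp(\mu)$ via the same equicontinuity lemma and Arzel\`a--Ascoli argument that you use. You instead prove that the orbit closure $Y$ is a minimal set (your observation $d(x_0,h_k^{-1}\cdot y)=d(h_k\cdot x_0,y)$ is exactly where isometry enters, parallel to the paper's use of it in the equicontinuity lemma), show that any uniform subsequential limit of the averages is a continuous invariant, hence constant, function on $Y$, and conclude that $Y$ carries a unique invariant probability measure; ergodicity then follows from extremality with no external input. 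The paper's route is shorter given the cited propositions from Bufetov; yours is self-contained and actually yields more, namely unique ergodicity of $\clos(\K(\infty)\cdot x_0)$, so that all weak limit points of $(\mu_n^{x_0})_{n\in\N}$ coincide and the full sequence converges --- a strengthening the paper's argument does not directly give. Two small remarks: your appeal to minimality is slightly more than needed (since $x_0\in h^{-1}(c_f)$ and this set is closed and invariant, it already contains $Y=\clos(\K(\infty)\cdot x_0)$ by definition of $Y$); and the Fubini step requires joint measurability of $(g,y)\mapsto f(g\cdot y)$, which is implicit in the paper's setup as well and is harmless here.
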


We can strengthen Theorem \ref{main-thm} a little further as follows. 
\begin{thm}\label{thm2}
Let $\mathcal{X}$ be a {\it compact metric space}. Assume that $\K(\infty)$ acts on $\X$ by isometric isomorphisms.  If $(x_n)_{n\in\N}$ is a convergent sequence in $\X$ and if $\mu \in\mathcal{P}(\X)$ is a weak limit point of the sequence of orbital measures $(\mu_n^{x_n})_{n\in\N}$, then $\mu$ is ergodic.  
\end{thm}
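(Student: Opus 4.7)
My plan is to reduce Theorem \ref{thm2} to Theorem \ref{main-thm} by showing that moving the basepoint from the variable $x_n$ to the fixed limit $x_0 := \lim_{n\to\infty} x_n$ perturbs $\mu_n^{x_n}$ only negligibly in the weak topology, uniformly in $n$. The key input is that $\K(\infty)$ acts by isometries, which enables a canonical ``synchronous coupling'' between $\mu_n^{x_n}$ and $\mu_n^{x_0}$.

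First I would observe that for every $n$, both measures are pushforwards of the Haar measure $m_n$ on $\K(n)$ under the orbit maps $g\mapsto g\cdot x_n$ and $g\mapsto g\cdot x_0$, respectively. Because every $g\in \K(n)$ is an isometry of $\X$, one has $d(g\cdot x_n, g\cdot x_0)=d(x_n,x_0)$ uniformly in $g$. Hence for any Lipschitz function $f\in C(\X)$ with Lipschitz constant $L$,
\[
\left| \int_{\X} f \, d\mu_n^{x_n} - \int_{\X} f \, d\mu_n^{x_0} \right|
\leq \int_{\K(n)} |f(g\cdot x_n) - f(g\cdot x_0)|\, dm_n(g)
\leq L\cdot d(x_n, x_0).
\]
Since $\X$ is a compact metric space, the algebra of Lipschitz functions is uniformly dense in $C(\X)$ by Stone--Weierstrass, so weak convergence may be tested on Lipschitz functions alone.

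Combining these ingredients, if $(\mu_{n_k}^{x_{n_k}})_k$ converges weakly to $\mu$ along some subsequence, the displayed bound together with $d(x_{n_k},x_0)\to 0$ forces $(\mu_{n_k}^{x_0})_k$ to converge weakly to the \emph{same} measure $\mu$. Thus $\mu$ is a weak limit point of the constant-basepoint sequence $(\mu_n^{x_0})_{n\in \N}$, and Theorem \ref{main-thm} applied to the point $x_0$ immediately yields that $\mu$ is ergodic.

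The only substantive step is the isometric coupling, which is short but essential: for a merely continuous (non-isometric) action, the orbit maps $g\mapsto g\cdot x_n$ and $g\mapsto g\cdot x_0$ can diverge badly for some $g\in \K(n)$ as $n\to\infty$, so no uniform estimate of this kind is available. All remaining ingredients (density of Lipschitz functions, pushforward formulas, compactness of $\X$) are standard, so I do not anticipate any real technical obstacle beyond writing out the coupling cleanly.
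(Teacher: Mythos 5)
Your proposal is correct and follows essentially the same route as the paper: both reduce to Theorem \ref{main-thm} by using the isometry identity $d_\X(g\cdot x_n, g\cdot x_0)=d_\X(x_n,x_0)$ to show $\int f\,d\mu_n^{x_n}-\int f\,d\mu_n^{x_0}\to 0$, hence that $\mu$ is also a weak limit point of $(\mu_n^{x_0})_{n\in\N}$. The only cosmetic difference is that you test against Lipschitz functions and invoke their density in $C(\X)$, whereas the paper uses uniform continuity of an arbitrary $f\in C(\X)$ directly; both are valid.
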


As a consequence, we have 
\begin{cor}
Let $\mathcal{X}$ be a {\it compact metric space}. Assume that $\K(\infty)$ acts on $\X$ by isometric isomorphisms. Then  a probability measure $\mu\in\mathcal{P}(\X)$ is ergodic if and only if  there exists a sequence of points $(x_n)_{n\in\N}$ in $\X$, such that $\mu$ is a weak limit point of a sequence of orbital measures $(\mu_n^{x_n})_{n\in\N}$. 
\end{cor}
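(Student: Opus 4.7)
The plan is to derive this corollary directly from Theorem \ref{Vershik} and Theorem \ref{thm2}, which between them cover the two implications.

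For the \emph{only if} direction, I would invoke Vershik's Theorem \ref{Vershik}. Assuming $\mu$ is ergodic, that result supplies a point $x \in \X$ (in fact, $\mu$-almost every such point works) with $\mu_n^x \Longrightarrow \mu$. Taking the constant sequence $x_n := x$ for all $n \in \N$ then exhibits a sequence in $\X$ whose orbital measures $(\mu_n^{x_n})_{n \in \N}$ admit $\mu$ as a weak limit point, as required.

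For the \emph{if} direction, the natural move is to apply Theorem \ref{thm2}; the only mismatch is that Theorem \ref{thm2} requires the sequence $(x_n)$ itself to be convergent, whereas the corollary posits only the existence of some sequence $(x_n)$. To bridge this gap I plan to use the compactness of $\X$ through a double subsequence extraction. First I would choose a subsequence $(n_k)$ along which $\mu_{n_k}^{x_{n_k}} \Longrightarrow \mu$. Then, since $\X$ is compact, I would pass to a further subsequence $(n_{k_j})$ so that $x_{n_{k_j}}$ converges to some $x_\infty \in \X$. Finally I would define a new sequence $(y_n)_{n\in\N}$ by setting $y_{n_{k_j}} := x_{n_{k_j}}$ on the selected subindices and $y_n := x_\infty$ on the rest. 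Then $(y_n)$ is convergent in $\X$, while $\mu$ remains a weak limit point of $(\mu_n^{y_n})_{n\in\N}$ along the subsequence $(n_{k_j})$. Theorem \ref{thm2} then delivers ergodicity of $\mu$.

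I do not anticipate any serious obstacle in executing this plan: it is essentially bookkeeping with subsequences, and all of the substantive work has already been carried out in Theorems \ref{Vershik} and \ref{thm2}.
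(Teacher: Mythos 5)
Your proof is correct and follows essentially the route the paper intends: the corollary is stated there as an immediate consequence of Theorem \ref{Vershik} (for the ``only if'' direction) and Theorem \ref{thm2} (for the ``if'' direction), with no further argument written out. Your subsequence extraction, using compactness of $\X$ to reduce an arbitrary sequence $(x_n)$ to a convergent one so that Theorem \ref{thm2} applies, correctly supplies the one detail the paper leaves implicit.
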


\begin{rem}
In some important situations, without the assumption of compactness on the space $\X$ and without requiring that the group action is isometric, the same result in Theorem \ref{thm2} still holds, for instance, see \cite[Corollary 4.2]{OV-ams96}. 
\end{rem}

We also provide a simple example showing that, in general, the assumption that the action is isometric can not be dropped. 
\begin{prop}\label{prop-iso}
There exists an inductively compact group  $\K(\infty)$,  a compact metric space $\X$, on which the group $\K(\infty)$ acts by homeomorphisms, and a sequence $(x_n)_{n\in\N}$ of points in $\X$, such that non of the probability measures on $\X$ that are weak limits of $(\mu_n^{x_n})_{n\in\N}$ is ergodic.
\end{prop}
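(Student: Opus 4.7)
The plan is to exhibit an explicit counterexample. Let $\X$ be the disjoint union of two copies of the convergent sequence $\{0\}\cup\{1/k:k\ge 1\}$, and denote the two accumulation points by $0_1$ and $0_2$; enumerate the remaining isolated points as $z_1, z_2, \ldots$ so that $z_{2k-1}$ lies in the first copy with $z_{2k-1}\to 0_1$, and $z_{2k}$ lies in the second copy with $z_{2k}\to 0_2$. Then $\X$ is a countable compact metric space in the obvious planar metric.

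For each $n$, let $\K(n)$ be the symmetric group on $\{z_1,\ldots,z_{2n}\}$, acting on $\X$ by permuting these $2n$ points and fixing everything else (in particular both $0_1$ and $0_2$). The natural inclusions $\K(n)\subset\K(n+1)$ make $\K(\infty)$ the group of all finitely supported permutations of $\{z_k:k\in\N\}$. The first task is to verify that every $g\in\K(\infty)$ is a homeomorphism of $\X$: since $g$ has finite support and fixes $0_1,0_2$, any sequence converging to $0_i$ is eventually outside the support of $g$ and so pointwise fixed, hence its image still converges to $0_i$; continuity at the isolated points $z_k$ is automatic. A parallel computation rules out isometricity in \emph{any} compatible metric (if $d$ were $\K(\infty)$-invariant, a transposition swapping $z_{2k-1}$ and $z_{2k}$ would force $d(0_1,z_{2k-1})=d(0_1,z_{2k})$, and letting $k\to\infty$ yields $0=d(0_1,0_2)$), which is why Theorem~\ref{thm2} does not apply here.

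Next I would determine the $\K(\infty)$-invariant Borel probability measures. Since any two points $z_j,z_k$ can be swapped by a transposition in $\K(\infty)$, every invariant probability measure must give equal mass to all singletons $\{z_k\}$; countable additivity then forces that common mass to be zero, so any such measure is supported on the two-point fixed set $\{0_1,0_2\}$ and has the form $t\delta_{0_1}+(1-t)\delta_{0_2}$ for some $t\in[0,1]$. The ergodic invariant measures are therefore exactly $\delta_{0_1}$ and $\delta_{0_2}$, and any $t\in(0,1)$ gives a non-ergodic invariant measure.

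Finally, I take the constant sequence $x_n=z_1$. Its $\K(n)$-orbit is exactly $\{z_1,\ldots,z_{2n}\}$, with half the points in each copy, so
\[
\mu_n^{x_n}=\frac{1}{2n}\sum_{k=1}^{2n}\delta_{z_k}=\frac{1}{2}\cdot\frac{1}{n}\sum_{k=1}^{n}\delta_{z_{2k-1}}+\frac{1}{2}\cdot\frac{1}{n}\sum_{k=1}^{n}\delta_{z_{2k}}.
\]
Since $z_{2k-1}\to 0_1$ and $z_{2k}\to 0_2$, a Ces\`aro averaging argument (for each $f\in C(\X)$, $f(z_{2k-1})\to f(0_1)$ and likewise on the other copy) shows the two averages on the right converge weakly to $\delta_{0_1}$ and $\delta_{0_2}$ respectively. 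Hence $\mu_n^{x_n}\Longrightarrow\tfrac{1}{2}\delta_{0_1}+\tfrac{1}{2}\delta_{0_2}$ is the unique weak limit point of the sequence, and it is a non-trivial convex combination of distinct ergodic measures, hence non-ergodic. The only subtle point in the whole argument is the continuity check for the $\K(\infty)$-action at $0_1$ and $0_2$; once that is in hand, the explicit form of $\mu_n^{x_n}$ and its weak limit make the non-ergodicity immediate.
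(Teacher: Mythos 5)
Your construction is correct, and it takes a genuinely different route from the paper. The paper realizes the counterexample with the Pr\"ufer $2$-group $\Z(2^\infty)=\varinjlim \Z/2^n\Z$ acting by translations on the compact space $\{0,1\}^{\K(\infty)}$; the key work there is an inductive construction of points $x_n$ whose $\K(n)$-orbit has exactly two elements $\{x_n,1-x_n\}$, so that $\mu_n^{x_n}=\frac12(\delta_{x_n}+\delta_{1-x_n})$ converges to $\frac12(\delta_{\bar 1}+\delta_{\bar 0})$, a non-ergodic measure since $\{\bar 0\}$ and $\{\bar 1\}$ are invariant. You instead use the locally finite group $\varinjlim S_{2n}$ of finitely supported permutations acting on a countable compact space consisting of two convergent sequences, with the \emph{constant} sequence $x_n=z_1$; the orbital measures are then uniform measures on growing finite orbits, and a Ces\`aro argument identifies the limit $\frac12\delta_{0_1}+\frac12\delta_{0_2}$. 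All the steps check out: the continuity of each finitely supported permutation at the two accumulation points, the identification of $\mu_n^{z_1}$ as the uniform measure on $\{z_1,\dots,z_{2n}\}$ (the pushforward of Haar measure under a transitive action with equal-size stabilizers), and the non-ergodicity via the invariant set $\{0_1\}$ of measure $\frac12$. Your approach buys two things the paper does not state: a complete classification of the invariant measures in your example (all are supported on $\{0_1,0_2\}$, by the swapping-plus-countable-additivity argument), and the observation that \emph{no} compatible metric on your $\X$ can make the action isometric, which is a sharper illustration that the isometry hypothesis in Theorem~\ref{thm2} is essential than the paper's remark that its particular chosen metric is not preserved. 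The paper's example, in exchange, uses an abelian group and keeps every orbital measure supported on just two points, which makes the weak convergence immediate without any averaging.
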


\section{Proof of Theorem \ref{main-thm}}
Our proof is based on the following Reverse Martingale type result from \cite{Bufetov-erg-dec}. For introducing this result, we need more notation. Given any bounded measurable function $f: \mathcal{X} \rightarrow \R$, we may define, for any positive integer $n\in\N$,  a function $A_n f$ by 
\begin{align}\label{av-op}
(A_n f)(x) : = \int_{\mathcal{X}} f  d\mu_n^x =   \int_{\mathcal{K}(n)}  f(g\cdot x) m_n (dg).
\end{align}
Let $\nu$ be any $\mathcal{K}(\infty)$-invariant probability measure.  By using the Reverse Martingale theorem, one may prove that the following limit 
\begin{align}\label{mart-limit}
(A^\nu_\infty f)(x) : = \lim_{n\to \infty}(A_n f)(x)
\end{align}
exists for $\nu$-almost every $x\in\mathcal{X}$. Moreover, $\nu$ is ergodic if and only if  there exists a dense subset $\Psi \subset L_1(\mathcal{X}, \mu)$ such that for any $\psi\in\Psi$, one has 
\begin{align}\label{erg-av}
\text{$(A^\nu_\infty \psi)(x)  = \int_{\mathcal{X}} \psi  d\nu$, for $\nu$-almost every $x\in\mathcal{X}$.}
\end{align}
See \cite[Propositions 6, 7 and 8]{Bufetov-erg-dec} for the details of the above statements.  

Let us now  proceed to the proof of Theorem \ref{main-thm}. By assumption, assume that there exists a subsequence $(n_i)_{i\in\N}$ of the natural numbers, such that 
$$
\text{$\mu_{n_i}^{x_0} \Longrightarrow \mu$ as $i \to \infty$.} 
$$
Note that if we replace the chain $\K(1) \subset \cdots \K(n) \subset\cdots$ by the chain $\K(n_1) \subset \cdots \K(n_i) \subset\cdots$, then both chains define a same inductive limit group $\K(\infty)$.  By this observation, without loss of generality, we may thus assume that 
$$
\text{$\mu_{n}^{x_0} \Longrightarrow \mu$ as $n \to \infty$.} 
$$
First note that if $x \in \K(\infty)\cdot x_0$, then 
\begin{align}\label{limit-m}
\text{$\mu_{n}^{x} \Longrightarrow \mu$ as $n \to \infty$.} 
\end{align}
Indeed, this follows from the elementary fact that once $x\in \K(N)\cdot x_0$, then for any $n\ge N$, we have $\mu^x_n = \mu_n^{x_0}$. 
By definition of weak convergence of probability measures, for any bounded continuous function $f: \X \rightarrow \R$, we have
\begin{align*}
\lim_{n\to\infty}\int_\X f (y)  \mu_n^{x}(dy)   = \int_\X f (y)  \mu(dy) \quad (x \in \K(\infty) \cdot x_0). 
\end{align*}
In the notation \eqref{av-op}, this means 
\begin{align}\label{to-extend}
\lim_{n\to\infty}  (A_n f)(x)   = \int_\X f   d \mu \quad (x \in \K(\infty) \cdot x_0). 
\end{align}
Since the support  $\supp(\mu_n^x)$ of the measure $\mu_n^x$ equals $\K(n)\cdot x$, we see that $ \supp(\mu_n^x)$  is a subset of the closure $\clos(\K(\infty)\cdot x_0)$ of the orbit $\K(\infty)\cdot x_0$. By \eqref{limit-m}, we get
\begin{align}\label{supp}
\supp(\mu) \subset \clos (\K(\infty)\cdot x_0). 
\end{align}

We need the following simple lemma. Let us denote the metric on $\X$ by $d_\X(\cdot, \cdot)$. 
\begin{lem}\label{lem-equicontinuity}
The  continuous functions in the sequence $(A_n f )_{n\in\N}$ are equicontinuous, that is, for any $\varepsilon>0$, there exists $\delta>0$, such that whenever $y, z \in\X$ satisfy $d_\X(y, z)\le \delta$, then 
$$
\sup_{n\in\N} | A_n f(y) - A_nf(z)| \le \varepsilon.
$$
\end{lem}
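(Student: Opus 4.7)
The plan is to exploit two ingredients together: the isometric nature of the action of each $\K(n)$, and the uniform continuity of $f$ that comes for free from compactness of $\X$. These two facts should combine to give a modulus of continuity for $A_n f$ that does not depend on $n$, which is exactly equicontinuity.

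More concretely, I would first observe that since $\X$ is a compact metric space and $f$ is continuous (and bounded), $f$ is uniformly continuous on $\X$. Given $\varepsilon>0$, choose $\delta>0$ so that $d_\X(u,v)\le\delta$ implies $|f(u)-f(v)|\le\varepsilon$. Next, for any $y,z\in\X$ and any $g\in\K(n)$, the isometry assumption gives
\[
d_\X(g\cdot y,\, g\cdot z)=d_\X(y,z).
\]
Thus, if $d_\X(y,z)\le\delta$, then $d_\X(g\cdot y, g\cdot z)\le \delta$ for \emph{every} $g\in\K(n)$, so that $|f(g\cdot y)-f(g\cdot z)|\le \varepsilon$ uniformly in $g$.

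Finally, from the definition \eqref{av-op} of $A_n f$ and the triangle inequality,
\[
|A_n f(y)-A_n f(z)|\le \int_{\K(n)}|f(g\cdot y)-f(g\cdot z)|\,m_n(dg)\le \varepsilon,
\]
and this bound is independent of $n$, which is precisely the equicontinuity statement. (The same computation, applied without the restriction $d_\X(y,z)\le\delta$, shows that each $A_n f$ is itself continuous, justifying the phrase ``continuous functions in the sequence''.)

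I do not expect any serious obstacle: the whole point of the isometry hypothesis is to make this kind of estimate trivial, because the modulus of continuity of $f$ passes unchanged through averaging against $m_n$. The only thing to be careful about is that uniform continuity of $f$ is genuinely used, and this is where the compactness assumption on $\X$ enters; without compactness the same argument would only deliver pointwise continuity of each $A_n f$, not equicontinuity of the family.
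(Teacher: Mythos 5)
Your argument is correct and coincides with the paper's own proof: both use uniform continuity of $f$ on the compact space $\X$, push the modulus of continuity through the isometric action to get $|f(g\cdot y)-f(g\cdot z)|\le\varepsilon$ uniformly in $g$, and then integrate against $m_n$ to obtain a bound independent of $n$. Nothing further is needed.
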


\begin{proof}
We will use the fact that the group action of $\K(\infty)$ on $\X$ is an isometric action. Since $f$ is a continuous function on the compact space $\X$, it is uniformly continuous. Thus for any $\varepsilon>0$, there exists $\delta>0$, such that  whenever $y, z \in\X$ satisfy $d_\X(y, z)\le \delta$, then $|  f(y) - f(z)| \le \varepsilon$. Thus under the above condition on $y, z$,  for any $g \in \K(\infty)$, we have $d_\X(g\cdot y, g \cdot z) = d_\X(y, z) \le \delta$ and hence $|  f(g \cdot y) - f( g \cdot z)| \le \varepsilon$. 
It follows that for any $n\in\N$, we have 
\begin{align*}
| A_n f(y) - A_n f(z)|   & \le \left| \int_{\mathcal{K}(n)}  f(g\cdot y) m_n (dg) -  \int_{\mathcal{K}(n)}  f(g\cdot z) m_n (dg)  \right| \le \varepsilon. 
\end{align*}
This proves the equicontinuity of the family of continuous functions $\{A_nf: n \in\N\}$.
\end{proof}

By using Arzel\`a-Ascoli theorem,  \eqref{to-extend}  can actually be strengthened to: 
\begin{align}\label{extend}
\lim_{n\to\infty}  (A_n f)(x)   = \int_\X f   d \mu \quad (x \in \clos( \K(\infty) \cdot x_0)). 
\end{align}
For the sake of completeness, let us give more details for the above assertion. First, if $x \in \clos( \K(\infty) \cdot x_0)$, then the sequence $((A_n f)(x))_{n\in\N}$ is a Cauchy sequence and hence converges. Indeed, for any $\varepsilon>0$, let $\delta>0$ be chosen as in Lemma \ref{lem-equicontinuity}.  We may find $y\in  \K(\infty) \cdot x_0$, such that  $d_{\X}(y, x)\le \delta$, hence $\sup_{n\in\N} | A_n f(y) - A_nf(x)| \le \varepsilon$. It follows that 
\begin{align*}
 | A_n f(x) - A_mf(x)|  \le &  | A_n f(x) - A_nf(y)|   +  | A_n f(y) - A_mf(y)| 
 \\
 &  +  | A_m f(y) - A_mf(x)| 
 \\
 \le & 2 \varepsilon  + | A_n f(y) - A_mf(y)|.
\end{align*}
Now we may use the fact derived from \eqref{to-extend} that  $(A_n f(y))_{n\in\N}$ is a Cauchy sequence to conclude that so is the sequence $((A_n f)(x))_{n\in\N}$.  Now we have proved that the sequence of functions $(A_n f|_{ \clos( \K(\infty) \cdot x_0)})$, all defined on a compact set $\clos( \K(\infty) \cdot x_0)$,  is uniformly bounded and equicontinuous and converges pointwisely, hence by Arzel\`a-Ascoli theorem, it converges uniformly on  $\clos( \K(\infty) \cdot x_0)$. This completes the proof of \eqref{extend}.

Finally, in view of \eqref{supp}, we have proved that 
\begin{align}\label{extend}
\lim_{n\to\infty}  (A_n f)(x)   = \int_\X f   d \mu \text{\, for $\mu$-almost every $x\in\X$}. 
\end{align}
Take $\Psi = C(\X)$, the set of continuous functions on $\X$. Since $\Psi$  is dense in $L^1(\X, \mu)$, we may apply characterization of ergodic measures \eqref{erg-av} to conclude that $\mu$ is ergodic. 

\section{Proof of Theorem \ref{thm2}}

By Theorem \ref{main-thm}, to prove Theorem \ref{thm2}, it suffices to show that if the sequence $(x_n)_{n\in\N}$ in $\X$ converges to a point $x_0\in\X$ such that $\mu_{n}^{x_n} \Longrightarrow \mu$ as $n \to \infty$, then $\mu_{n}^{x_0} \Longrightarrow \mu$ as $n \to \infty$. 

Let $f$ be any continuous function on $\X$. By assumption, we have 
\begin{align*}
\lim_{n\to\infty}  \int_\X f d \mu_n^{x_n}  = \int_\X f d \mu. 
\end{align*}
For any $\varepsilon>0$, let $\delta>0$ be chosen such that whenever $d_\X(x,y)\le \delta$, then $| f(x) - f(y)| \le \varepsilon$.  There exists  $n_0$,  whenever  $n \ge n_0$, we have $d_{\X}(x_n, x_0) \le \delta$.  Using again our assumption that the group action is an isometric action, for any $n\ge n_0$, the inequality $\left| f(g\cdot x_n)  - f(g\cdot x_0) \right| \le \varepsilon $ holds for any $g \in\K(\infty)$. Consequently, for any $n\ge n_0$, we have
\begin{align*}
\left|  \int_\X f d \mu_n^{x_n}  -  \int_\X f d \mu_n^{x_0} \right|  \le  \int_{\mathcal{K}(n)}   \left| f(g\cdot x_n)  - f(g\cdot x_0) \right|   m_n (dg ) \le \varepsilon.
\end{align*}
It follows that 
\begin{align*}
\limsup_{n\to\infty} \left|  \int_\X f d \mu_n^{x_n}  -  \int_\X f d \mu_n^{x_0} \right| = 0. 
\end{align*}
Hence the relation 
\begin{align*}
\lim_{n\to\infty}  \int_\X f d \mu_n^{x_0}  = \int_\X f d \mu
\end{align*}
holds for any continuous function $f$ on $\X$, that is, 
$$
\text{$\mu_{n}^{x_0} \Longrightarrow \mu$ as $n \to \infty$}.
$$
This completes the proof of Theorem \ref{thm2}. 

\section{Proof of Proposition \ref{prop-iso}}
Let $\K(n) = \Z/2^n\Z$. Note that we have the natural inclusion 
$$
\K(n)  = \Z/2^n\Z \simeq   2 \Z/2^{n+1}\Z  \subset  \Z/2^{n+1}\Z = \K(n+1).
$$
The corresponding group inductive limit $\K(\infty)$ is called the Pr\"ufer 2-group and is usually denoted by $\Z(2^\infty)$. Obviously, $\Z(2^\infty)$ is a countable group, which is inductively compact but not compact.   Let $\X = \{0, 1\}^{\K(\infty)}$, then $\X$ is a compact metrizable space.   For instance, we may fix the following metric on $\X$: if $x = (x(g))_{g\in\K(\infty)}$ and $y = (y(g))_{g\in\K(\infty)}$, then
$$
d_\X(x, y) = \sum_{n= 1}^\infty 4^{-n} | \{g\in\K(n):   x(g) \ne y(g) \}|, 
$$
where $| \{g\in\K(n):   x(g) \ne y(g) \}|$ denotes the  cardinality of  the finite set $\{g\in\K(n):   x(g) \ne y(g) \}$. The group $\K(\infty)$ acts naturally on $ \{0, 1\}^{\K(\infty)}$ by translations. This action is not an isometric action. 

Let us fix the natural inclusions $ \K(n)   \subset  \K(n+1)$. For any $n\in\N$, we have, 
\begin{align*}
\K(n+1) &=  \K(n)  \sqcup ( \K(n+1)  \setminus \K(n) ).
\end{align*}
Hence $\K(n+1)$ is the union of two $\K(n)$-cosets in $\K(n+1)$.   Now we set $x_{n} \in\X$ as follows:  $x_{n}: \K(\infty) \rightarrow\{0, 1\}$ is a function defined inductively by the following: on $\K(n)$, we set $x_{n}|_{\K(n)}= \1_{\K(n-1)}$; on $\K(n+1)$,   we extend the definition $x_{n}$ such that its restriction on the nontrivial $\K(n)$-coset  $g_n + \K(n)$ with $g_n \in \K(n+1) \setminus \K(n)$  is a translation of $x_n|_{\K(n)}$; if we have defined $x_n|_{\K(m)}$  for a positive integer $m\ge n$, then we may extend it to a function $x_n|_{\K(m+1)}$ such that the restriction of $x_n|_{\K(m+1)}$ on the nontrivial $\K(m)$-coset $g_m + \K(m)$ with $g_m \in \K(m+1) \setminus \K(m)$, is given by 
$$
x_n|_{g_m + \K(m)} (\cdot) = x_n|_{\K(m)} ( \cdot - g_m). 
$$
It is not hard to see that the orbit $\K(n)\cdot x_n$ consists of two points (both viewed as $\{0,1\}$-valued functions defined on $\K(\infty)$): 
$$
\K(n)\cdot x_n = \{x_n, 1 - x_n\}. 
$$
And we have 
$$
\mu_{n}^{x_n} = \frac{1}{2}(\delta_{x_n} + \delta_{1-x_n}), 
$$
where $\delta_{x_n}$ and $\delta_{1-x_n}$ are Dirac measures on $x_n$ and $1-x_n$ respectively. By construction, we have the following convergence in the space $\X$: 
$$
\lim_{n\to\infty}x_n  = \bar{1} \an \lim_{n\to\infty} (1-x_n) = \bar{0},
$$ 
where $\bar{0}, \bar{1}\in\X = \{0,1\}^{\K(\infty)}$ are constant functions on $\K(\infty)$ taking values $0$ and $1$ respecively. Consequently, 
$$
\text{$\mu_n^{x_n} \Longrightarrow   \frac{1}{2}(\delta_{\bar{1}} + \delta_{\bar{0}}) $ as $n\to \infty$,}
$$
 Since the singletons $\{\bar{0}\}$ and $\{\bar{1}\}$ are both  $\K(\infty)$-invariant,  the weak limit probability measure $\frac{1}{2}(\delta_{\bar{1}} + \delta_{\bar{0}})$ is not ergodic. This completes the proof of Proposition \ref{prop-iso}.


\def\cprime{$'$} \def\cydot{\leavevmode\raise.4ex\hbox{.}}

\end{document}